\documentclass[draft,twoside]{article}
\usepackage{amssymb,amsmath}
\textwidth 5.5truein
\textheight 7.7truein
\oddsidemargin -0.35cm
\evensidemargin -0.35cm
\hoffset=1.35cm

\newtheorem{theorem}{Theorem}{}
\newtheorem{lemma}{Lemma}{}
{}
{}
{}
\newtheorem{example}{Example}{}
\def\rmd{\mathrm{d}}
\def\a{\alpha}
\def\b{\beta}
\def\D{\Delta}

\def\f{\varphi}

\def\t{\tau}
\def\z{\zeta}

\def\dfrac{\displaystyle\frac}
\def\dsum{\displaystyle\sum}
\def\dint{\displaystyle\int}

\def\G{\Gamma}
\newcommand{\Res}{\mathop{\rm Res}}

\def\lt{{\cal L}}
\def\gt{{\cal G}}
\def\mt{{\cal M}}
\def\rt{{\cal R}}

\newcommand\arctg{\mathop{\rm arctg}}
\newcommand\res{\mathop{\rm Res}}
\newcommand\re{\mathop{\rm Re}}

\pagestyle{myheadings}

\title{On the Laplace-type transform and its applications}%{On the gamma transform and its applications}

\author{Slobodan B. Tri\v ckovi\'c$^1$, Miomir S. Stankovi\'c$^2$\\[2mm]
$^1$\small\sl University of Ni\v s, Department of Mathematics, Serbia, e-mail: sbt@junis.ni.ac.rs\\
$^2$\small\sl Mathematical Institute of the Serbian Academy of Sciences and Arts, Belgrade, Serbia}

\date{\today}

\begin{document}

\maketitle

\begin{abstract} We use the Laplace transform and the Gamma function to introduce a new integral transform and 
name it the Laplace-type transform possessing the property of mapping a function to a functional sequence, 
which cannot be achieved by the Laplace transform. In addition, we construct a backward difference as 
a generalization of the backward difference operator $\nabla$, and connecting it to the Laplace-type transform, 
we deduce a method for solving difference equations and, relying on classical orthogonal polynomials, for obtaining 
combinatorial identities. A table of some elementary functions and their images is in the Appendix.
\smallskip

\noindent KEY WORDS: Gamma function, Laplace transform, backward difference operator.
\end{abstract}

\section{Introduction and preliminaries}

Very often, if we interpret continuity problems as discrete, we can solve them more easily. For example,
complicated definite integrals of continuous functions are calculated numerically by being reduced to finite sums.
Also, the opposite is true, i.e. the only way to solve certain discrete problems is to view them in the light of
continuity. For this purpose, we develop a specific transform and its inverse, enabling us to map the continuous
functions into sequences and vice versa.

To start with, it is known that the Gamma function is defined by the integral
\begin{equation}\label{gamma-f}
\G(\a)=\int_0^\infty e^{-x}x^{\a-1}\,\rmd x\quad(\a>0).
\end{equation}
We can regard it as the Mellin transform of the function $e^{-x}$, i.e. $\G(\a)=\mt\{e^{-x}\}(\a)$.
On the other hand, for $p>0$, after introducing the substitution $x=p\,t$, we obtain
$$
\G(\a)=\int_0^\infty s^\a e^{-s\,t}t^{\a-1}\,\rmd t\quad\Rightarrow\quad
\frac1{s^\a}=\int_0^\infty e^{-s\,t}\frac{t^{\a-1}}{\G(\a)}\,\rmd t=\lt\Big\{\frac{t^{\a-1}}{\G(\a)}\Big\}(s),
$$
and thereby we have determined the Laplace transform of the function $t^{\a-1}/\G(\a)$.

However, if apart from $t^{\a-1}$ there appears a real-valued piecewise continuous function $f(t)$
over $(0,\infty)$, and of exponential order $r$, i.e. there exist positive numbers $M$ and $r$ such
that the inequality $|f(t)|<Me^{rt}$ holds, according to and in keeping with \cite[p.~744, Eq (2.1)]{Temme},
we introduce a new integral transform $\gt$ of the function $f(t)$ and name it the \textbf{Laplace-type transform}
\begin{equation}\label{gamma-t}
\gt\{f(t)\}(s)=\frac{1}{\G(\a)}\int_0^\infty e^{-s\,t}\,t^{\a-1}\,f(t)\,\rmd t\quad(s>r,\,\a>0).
\end{equation}
Under the given conditions, this integral is convergent, since
$$
\int_0^\infty e^{-s\,t}\,\frac{t^{\a-1}}{\G(\a)}\,f(t)\,\rmd t<
M\int_0^\infty e^{-(s-r)\,t}\,\frac{t^{\a-1}}{\G(\a)}\,\rmd t=\frac1{(s-r)^\a}.
$$

By setting $\a-1=n$ in \eqref{gamma-t} for $n\in\mathbb N\cup\{0\}$, the Laplace-type transform maps
a function $f(t)$ to a sequence of functions $\{\f_n(s)\}_{n\in\mathbb N_0}$, given by
\begin{equation}\label{1}
\f_n(s)=\int_0^\infty e^{-s\,t}\,\frac{t^n}{n!}\,f(t)\,\rmd t\quad(s>0),
\end{equation}
i.e. $\f_n(s)\fallingdotseq f(t)$.
For $s=1$ we write  $\f_n$ instead of $\f_n(1)$ so that in this case, the Laplace-type transform maps
the function $f(t)$ to a sequence of numbers $\{\f_n\}_{n\in\mathbb N_0}$, i.e. $\f_n\fallingdotseq f(t)$,
and later within the section Applications it will enable us to solve difference equations through
differential ones.

\begin{example} The Hurwitz zeta function\index{function!Hurwitz zeta} $\z(s,a)$ initially defined
for $\sigma>1$ ($s=\sigma+i\tau$) by the series having its  integral representation
\begin{equation}\label{hurwitz-sum}
\z(s,a)=\sum_{k=0}^\infty\frac1{(k+a)^s},
\end{equation}
where $a$ is a fixed real number, $0<a\leqslant 1$.

Multiplying the equality \eqref{hurwitz-sum} by $\G(s)$,
\begin{equation*}
\z(s,a)\G(s)=\sum_{k=0}^{+\infty}\frac1{(k+a)^s}\int_0^\infty x^{s-1}e^{-x}\,\rmd x,
\end{equation*}
and introducing the substitution $x=(k+a)t$, then interchanging the sum and integral, dividing by $\G(z)$, 
we get the integral representation of the Hurwitz zeta function \begin{equation}\label{hurwitz-int}
\z(s,a)=\int_0^\infty t^{s-1}\sum_{k=0}^\infty e^{-(k+a)t}\,\rmd t
=\frac1{\G(s)}\int_0^\infty\frac{t^{s-1}e^{-at}}{1-e^{-t}}\,\rmd t.
\end{equation}
This integral can be viewed as a Mellin transform of the function $\dfrac{e^{-ax}}{1-e^{-x}}$.

Referring to \eqref{gamma-t} for $f(t)=\dfrac1{1-e^{-t}}$, $\a=s$, $s=a$, and taking into account \eqref{hurwitz-int}, we have
\begin{equation}\label{5}
\gt\{f(t)\}(s)=\gt\left\{\frac1{1-e^{-t}}\right\}(\a)=\z(\a,a).
\end{equation}
However, taking $f(t)=\dfrac1{e^t-1}$, we find
$$
\gt\left\{\frac1{e^t-1}\right\}(s)=\frac1{\G(s)}\int_0^\infty\frac{e^{-at}}{e^t-1}t^{s-1}\,\rmd t
=\frac1{\G(s)}\int_0^\infty\frac{e^{-(a+1)t}}{1-e^{-t}}t^{s-1}\,\rmd t=\z(s,a+1).
$$

On the other hand, the generating function for the Bernoulli polynomials is
$$
\frac{te^{at}}{e^t-1}=\sum_{k=0}^\infty B_k(a)\frac{t^k}{k!},
$$
which implies
\begin{equation}\label{6}
\z(s,a+1)=\frac1{\G(s)}\int_0^\infty\frac{te^{a\,t}}{e^t-1}e^{-2a\,t}t^{s-2}\,\rmd t
=\frac1{\G(s)}\int_0^\infty e^{-2a\,t}t^{s-2}\sum_{k=0}^\infty B_k(a)\frac{t^k}{k!}\,\rmd t.
\end{equation}
Interchanging summation and integration, we rewrite \eqref{6} in the form of
$$
\z(s,a+1)=\sum_{k=0}^\infty\frac{B_k(a)}{\G(s)}\int_0^\infty e^{-2a\,t}\frac{t^{k}}{k!}t^{s-2}\,\rmd t
=\sum_{k=0}^\infty\frac{B_k(a)}{2^{s+k-1}\G(s)}\int_0^\infty e^{-a\,t}\frac{t^{k}}{k!}t^{s-2}\,\rmd t.
%=\gt\{x^{s-2}\}(s).
$$
After introducing substitution $x=at$ in the last integral, in view of \eqref{1} and Appendix,
%$$
%\int_0^\infty e^{-2a\,t}t^{k+s-2}\,\rmd t
%=\frac1{2^s}\gt\Big\{\Big(\frac x2\Big)^{k-1}\Big\}(s)=\frac{\G(s+k-1)}{(2a)^{s+k-1}}.
%$$
we have
$$
\z(s,a+1)=\sum_{k=0}^\infty\frac{B_k(a)}{\G(s)}\frac{\G(s+k-1)}{(2a)^{s+k-1}k!}
=\frac{\G(s-1)}{(2a)^{s-1}\G(s)}+\sum_{k=1}^\infty\frac{B_k(a)}{\G(s)}\frac{\G(s+k-1)}{(2a)^{s+k-1}k!}.
$$
Using the relation $B_k(a)=-k\z(1-k,a)$, $k\in\mathbb N$, one obtains the representation
$$
\z(s,a+1)=\frac1{(2a)^{s-1}(s-1)}-\sum_{k=1}^\infty\frac{\z(1-k,a)}{(2a)^{s+k-1}}\frac{(s)_{k-1}}{k!},
\quad (s)_{k-1}=\frac{\G(s+k-1)}{\G(s)},
$$
with $(x)_n$ being the Pochhammer symbol.
\end{example}

\section{Basic properties}

We shall show later how the Laplace-type transform helps us derive a method for solving difference equations
and obtaining combinatorial identities. For this purpose, we need to investigate some of its properties.

\begin{lemma}\label{lm1} Let $f(t)$ be piecewise continuous on $[0,\infty)$ of exponential order $r$
and $F(s)$ denote its Laplace transform, i.e. $F(s)=\lt\{f(t)\}(s)$. For its $n$th derivative $F^{(n)}(s)$
there holds % F^{(n)}(s)=For $n\in\mathbb N$,
\begin{equation}\label{3}
\f_n(s)=\frac{(-1)^n}{n!}F^{(n)}(s), %,\quad\gt\{f\}=\frac{(-1)^n}{n!}\lt\{f(t)\}^{(n)}.
\end{equation}
where $\f_n(s)\fallingdotseq f(t)$.
\end{lemma}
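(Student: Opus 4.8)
The plan is to recognize that the Laplace-type transform image $\f_n(s)$ defined in \eqref{1} is, up to the factor $1/n!$, obtained from the ordinary Laplace transform $F(s)=\lt\{f(t)\}(s)=\int_0^\infty e^{-st}f(t)\,\rmd t$ by differentiating under the integral sign with respect to the parameter $s$. Concretely, each differentiation in $s$ pulls down a factor $-t$ from the kernel $e^{-st}$, so that $\frac{\rmd^n}{\rmd s^n}e^{-st}=(-1)^n t^n e^{-st}$, and hence formally $F^{(n)}(s)=(-1)^n\int_0^\infty e^{-st}t^n f(t)\,\rmd t=(-1)^n n!\,\f_n(s)$, which rearranges to \eqref{3}.

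The steps I would carry out are: first, write down $F(s)=\int_0^\infty e^{-st}f(t)\,\rmd t$, which converges for $s>r$ by the exponential-order hypothesis. Second, justify differentiating under the integral sign $n$ times; I would argue by induction on $n$, at each stage invoking a standard theorem on differentiation of parameter-dependent integrals. The dominating function needed is of the form $t^k e^{-(s-r)t}$ up to the constant $M$ from $|f(t)|<Me^{rt}$, which is integrable on $(0,\infty)$ for every fixed $s>r$ and is locally uniform in $s$ on compact subsets of $(r,\infty)$; this legitimizes the interchange. Third, evaluate the $n$th derivative of the kernel, $\partial_s^n e^{-st}=(-1)^n t^n e^{-st}$, to get $F^{(n)}(s)=(-1)^n\int_0^\infty e^{-st}t^n f(t)\,\rmd t$. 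Fourth, divide by $n!$ and compare with the definition \eqref{1} of $\f_n(s)$ to conclude $\f_n(s)=\frac{(-1)^n}{n!}F^{(n)}(s)$.

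The only genuine obstacle is the rigorous justification of differentiation under the integral sign, i.e. producing the integrable majorant that is uniform for $s$ in a neighborhood of any point of $(r,\infty)$; once the bound $|t^n e^{-st}f(t)|\le M t^n e^{-(s-r)t}$ is in hand and one restricts $s\ge r+\delta$ for some $\delta>0$, the majorant $M t^n e^{-\delta' t}$ (with $0<\delta'<\delta$) handles it, and everything else is a routine computation of $\partial_s^n e^{-st}$. I would present the induction step in one or two lines and leave the elementary convergence estimate — already essentially displayed in the preliminaries following \eqref{gamma-t} — to the reader.
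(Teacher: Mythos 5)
Your proposal is correct and follows essentially the same route as the paper: differentiate $F(s)=\int_0^\infty e^{-st}f(t)\,\rmd t$ $n$ times under the integral sign, use $\partial_s^n e^{-st}=(-1)^n t^n e^{-st}$, and compare with \eqref{1}. The only difference is that you spell out the dominated-convergence majorant justifying the interchange, whereas the paper simply cites a reference for that step.
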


\begin{proof} By differentiating $n$ times $F(s)$, we have
$$
F^{(n)}(s)=\frac{d^n}{ds^n}F(s)=\frac{d^n}{ds^n}\left(\int_0^\infty e^{-s\,t}f(t)\,dt\right)
=(-1)^n\int_0^\infty e^{-s\,t}t^nf(t)\,dt. %=(-1)^nf_n,
$$
By the hypotheses, for $s\geq x_0>r$, it is justified to interchange
the derivative and integral sign in the last calculation (see \cite{Shiff}).
In view of \eqref{1}, %and \eqref{s-1},
there immediately follows \eqref{3}.
\end{proof}

\begin{lemma}\label{lm2}Let $g(t)=\dint_0^t f(x)\,dx$. If $f(t)\fallingdotseq\f_n(s)$ and $g(t)\fallingdotseq\psi_n(s)$, then
$$
\psi_n(s)=\sum_{k=0}^n\frac{\f_k(s)}{s^{n-k+1}}.
$$
\end{lemma}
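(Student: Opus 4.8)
The plan is to use Lemma~\ref{lm1} to translate the statement into a fact about Laplace transforms, exploiting the classical rule for the Laplace transform of an integral. Recall that if $F(s)=\lt\{f(t)\}(s)$ and $g(t)=\int_0^t f(x)\,dx$, then $\lt\{g(t)\}(s)=F(s)/s$. By Lemma~\ref{lm1}, $\psi_n(s)=\frac{(-1)^n}{n!}G^{(n)}(s)$ where $G(s)=\lt\{g(t)\}(s)=F(s)/s$. So the whole lemma reduces to computing the $n$th derivative of $F(s)/s$ and re-expressing it in terms of the derivatives $F^{(k)}(s)$, which in turn are the $\f_k(s)$ up to the known factor.

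The key computational step is the Leibniz rule applied to the product $F(s)\cdot s^{-1}$:
$$
\frac{d^n}{ds^n}\!\left(\frac{F(s)}{s}\right)=\sum_{k=0}^n\binom{n}{k}F^{(k)}(s)\,\frac{d^{\,n-k}}{ds^{\,n-k}}\!\left(\frac1s\right)
=\sum_{k=0}^n\binom{n}{k}F^{(k)}(s)\,\frac{(-1)^{n-k}(n-k)!}{s^{n-k+1}}.
$$
Multiplying by $(-1)^n/n!$ and simplifying the constants, $\binom{n}{k}(n-k)!/n!=1/k!$, and $(-1)^n(-1)^{n-k}=(-1)^k$, one gets
$$
\psi_n(s)=\frac{(-1)^n}{n!}\,\frac{d^n}{ds^n}\!\left(\frac{F(s)}{s}\right)
=\sum_{k=0}^n\frac{(-1)^k F^{(k)}(s)}{k!\,s^{n-k+1}}
=\sum_{k=0}^n\frac{\f_k(s)}{s^{n-k+1}},
$$
where the last equality is exactly \eqref{3} from Lemma~\ref{lm1}. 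That finishes the proof.

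I would also want to be careful to justify the initial reduction: that $g$ itself satisfies the hypotheses needed for Lemma~\ref{lm1}, i.e. that $g(t)=\int_0^t f(x)\,dx$ is piecewise continuous (in fact continuous) and of exponential order. Continuity is immediate since $f$ is piecewise continuous; the exponential-order bound follows by integrating $|f(x)|<Me^{rx}$, giving $|g(t)|\le \frac{M}{r}(e^{rt}-1)$ for $r>0$ (and a linear bound, hence still exponential order, if $r=0$). So the standard integration rule for the Laplace transform applies, $G(s)=F(s)/s$ holds for $s$ large, and differentiating under the integral sign is legitimate on the same half-line as in Lemma~\ref{lm1}.

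I do not expect any real obstacle here; the lemma is essentially the Leibniz rule dressed up in the $\f_n$ notation. The only mild point of care is bookkeeping with the signs and factorials in collapsing $\binom{n}{k}(-1)^{n-k}(n-k)!/n!$ down to $(-1)^k/k!$, and making sure the index $k$ ranges over $0,\dots,n$ so that the claimed finite sum (rather than an infinite series) is what appears. An alternative, purely analytic route would avoid Lemma~\ref{lm1} entirely: expand $1/s^{n-k+1}$ as a geometric-type manipulation, or integrate the defining integral \eqref{1} for $g$ by parts $n$ times against $t^n e^{-st}$, repeatedly peeling off boundary terms; but that is strictly more work than the derivative-of-a-product argument, so I would present the Laplace-transform version.
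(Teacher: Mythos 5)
Your proof is correct and follows essentially the same route as the paper: write $\lt\{g\}(s)=F(s)/s$, apply Lemma~\ref{lm1}, and expand the $n$th derivative of the product $s^{-1}F(s)$ by the Leibniz rule, with the constants collapsing via $\binom{n}{k}(n-k)!/n!=1/k!$ to give $\sum_{k=0}^n\f_k(s)/s^{n-k+1}$. Your added care about $g$ being continuous and of exponential order is a sensible justification that the paper leaves implicit.
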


\begin{proof}Let $F_f(s)$ and $F_g(s)$ denote the Laplace transform of $f(t)$ and $g(t)$
respectively. Considering that
$$
F_g(s)=\frac1sF_f(s),
$$
and making use of \eqref{1}, \eqref{3} and the Leibnitz rule, we find $\psi_n(s)\fallingdotseq g(t)$,
where
$$
\psi_n(s)=\frac{(-1)^n}{n!}\frac1sF_f^{(n)}(s)=\sum_{k=0}^n\frac1{s^{n-k+1}}\frac{(-1)^k}{k!}F_f^{(k)}(s)
=\sum_{k=0}^n\frac{\f_k(s)}{s^{n-k+1}},
$$
which proves the statement. \end{proof}

The convolution of two functions, $f(t)$ and $g(t)$, defined for $t>0$, plays an important role
in numerous different physical applications. The convolution is given by the integral
$$
(f*g)(t)=\int_0^t f(t-\t)g(\t)\,d\t,
$$
which exists if $f$ and $g$ are piecewise continuous.

\begin{lemma}\label{lm3}Let $\f_n(s)\fallingdotseq f(t)$ and $\psi_n(s)\fallingdotseq g(t)$.
The following equality holds
$$
\sum_{k=0}^n\f_{n-k}(s)\psi_k(s)\fallingdotseq(f*g)(t).
$$
\end{lemma}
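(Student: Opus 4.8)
The plan is to reduce the statement to the classical convolution theorem for the Laplace transform via Lemma~\ref{lm1}. Let $F_f(s)=\lt\{f(t)\}(s)$ and $F_g(s)=\lt\{g(t)\}(s)$. The convolution theorem gives $\lt\{(f*g)(t)\}(s)=F_f(s)F_g(s)$, and by Lemma~\ref{lm1} the Laplace-type image of $f*g$ is the sequence whose $n$th term is $\frac{(-1)^n}{n!}\bigl(F_f F_g\bigr)^{(n)}(s)$. So the whole proof amounts to showing that this coincides with $\sum_{k=0}^n\f_{n-k}(s)\psi_k(s)$, where $\f_n(s)=\frac{(-1)^n}{n!}F_f^{(n)}(s)$ and $\psi_n(s)=\frac{(-1)^n}{n!}F_g^{(n)}(s)$.

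First I would write out $\bigl(F_fF_g\bigr)^{(n)}(s)$ using the Leibniz rule for the $n$th derivative of a product:
$$
\bigl(F_fF_g\bigr)^{(n)}(s)=\sum_{k=0}^n\binom{n}{k}F_f^{(n-k)}(s)\,F_g^{(k)}(s).
$$
Then I would multiply by $\frac{(-1)^n}{n!}$ and regroup the binomial coefficient as $\binom{n}{k}=\frac{n!}{(n-k)!\,k!}$, so that the factor $n!$ cancels and the sign $(-1)^n$ splits as $(-1)^{n-k}(-1)^k$. This turns the $k$th summand into $\frac{(-1)^{n-k}}{(n-k)!}F_f^{(n-k)}(s)\cdot\frac{(-1)^k}{k!}F_g^{(k)}(s)=\f_{n-k}(s)\psi_k(s)$, which is exactly the desired term. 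Summing over $k$ completes the identity.

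The only genuine point requiring care is the justification that $\lt\{(f*g)(t)\}(s)=F_f(s)F_g(s)$ actually holds under the stated hypotheses and that the resulting Laplace transform is $n$ times differentiable so that Lemma~\ref{lm1} applies to $f*g$; this follows because $f$ and $g$ piecewise continuous and of exponential order implies $f*g$ is continuous and of exponential order (one bounds $|(f*g)(t)|\le M_fM_g\,t\,e^{r t}$ after absorbing the polynomial factor, or simply takes a slightly larger exponent), so the convolution theorem and Lemma~\ref{lm1} are both legitimate. I expect this to be essentially the whole obstacle — everything else is the Leibniz rule bookkeeping above, which is routine. I would therefore present the argument in that order: invoke the convolution theorem, apply Lemma~\ref{lm1}, expand by Leibniz, and read off the terms.
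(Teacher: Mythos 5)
Your proposal is correct and follows essentially the same route as the paper's own proof: invoke the convolution theorem $F_{f*g}=F_f\cdot F_g$, apply Lemma~\ref{lm1} to $f*g$, expand the $n$th derivative of the product by the Leibniz rule, and absorb the binomial coefficient into the factorials to read off $\f_{n-k}(s)\psi_k(s)$. Your added remark verifying that $f*g$ is itself of exponential order (so that the convolution theorem and Lemma~\ref{lm1} legitimately apply) is a point the paper passes over silently, and is a welcome extra.
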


\begin{proof}Let $F_f(s)$, $F_g(s)$ and $F_{f*g}(s)$ denote the Laplace transform of $f(t)$, $g(t)$
and $(f*g)(t)$ respectively. It is well known that
$$
F_{f*g}(s)=F_f(s)\cdot F_g(s).
$$
Using the result of Lemma \ref{lm1}, we obtain
$$
h_n(s)=\frac{(-1)^n}{n!}\big(F_f(s)\cdot F_g(s)\big)^{(n)}\fallingdotseq (f*g)(t),
$$
and by using the Leibnitz rule and \eqref{3}, we have
$$
h_n(s)=\sum_{k=0}^n\frac{(-1)^{n-k}F_f^{(n-k)}(s)}{(n-k)!}\frac{(-1)^kF_g^{(n-k)}(s)}{k!}
=\sum_{k=0}^n\f_{n-k}(s)\psi_k(s),
$$
which ends the proof. \end{proof}

\section{The Laplace-type transform and Laguerre\\ polynomials}

We recall that the Laguerre polynomials, the explicit representation of which is
$$
L_n(x)=\sum_{k=0}^n(-1)^k\binom nk\frac{x^k}{k!},
$$
by introducing an inner product as follows
$$
(L_m,L_n)=\int_0^\infty e^{-x}L_m(x)L_n(x)\,dx,
$$
we make them into an orthogonal system with respect to the weight function $e^{-x}$, whereby
they form an orthogonal base of the Hilbert space $L_2(0,\infty;e^{-x})$. To connect them
to the Laplace-type transform, we replace $x$ with $s\,x$, and the above inner product becomes
$$
\int_0^\infty e^{-s\,x}\,s\,L_m(s\,x)L_n(s\,x)\,dx.
$$
By this, we construct a system of Laguerre functions over $(0,\infty)$, i.e.
\begin{equation}\label{laguerre*}
L_n^*(x,s)=(-1)^n\sqrt s\,L_n(s\,x)\quad(s>0),
\end{equation}
and define their inner product by
$$
(L_m^*,L_n^*)=\int_0^\infty e^{-s\,x}(-1)^m\sqrt s\,L_m(s\,x)(-1)^n\sqrt s\,L_n(s\,x)\,dx.
$$
Since we easily find $(L_m^*,L_n^*)=0,\,m\ne n$, $(L_n^*,L_n^*)=1$, we conclude the system $L_n^*(x)$
($n\in\mathbb N_0$) form an orthonormal basis of the Hilbert space with respect to the weight function
$e^{-s\,x}$. A function $f(x)$ of the exponential order $r$ belongs to $L_2(0,\infty;e^{-s\,x})$,
because if there exist positive constants $M$ and $r$ such that $|f(x)|<Me^{r\,x}$, then
$$
\|f\|^2=\int_0^{+\infty}e^{-s\,x}|f(x)|^2\,dx<M^2\int_0^{+\infty}e^{-(s-2r)x}\,dx=\frac{M^2}{s-2r}\quad(s>2r),
$$
and can be expanded into the Fourier series over the functions $L_{n,s}^*(x)$, so that we have
\begin{equation}\label{expansion}
f(x)=\sum_{k=0}^\infty a_k(s)L_{k,s}^*(x),\quad a_n(s)
=(-1)^n\int_0^\infty e^{-s\,x}f(x)\sqrt s\,L_n(s\,x)\,dx.
\end{equation}%Making use of \eqref{laguerre*}, there follows %\begin{equation}\label{an}\end{equation}
Also, the Bessel inequality holds
\begin{equation}\label{bess-ineq}
\sum_{k=0}^\infty a_k^2(s)\leq\|f\|^2.
\end{equation}

To prove the expansion \eqref{expansion}, we form its $n$th partial sum, and after rearrangement obtain
$$
S_n(x;f)=\int_0^\infty e^{-s\,t}f(t)\left[\sum_{k=0}^n L_k^*(t,s)L_k^*(x,s)\right]\,dt
$$
and applying the Christoffel-Darboux formula
$$
\sum_{k=0}^n L_k^*(t,s)L_k^*(x,s)=(n+1)\frac{L_{n+1}^*(x,s)L_n^*(t,s)-L_{n+1}^*(t,s)L_n^*(x,s)}{t-x}
=(n+1)\frac{\Phi_n(x,t)}{t-x},
$$
we have
$$
S_n(x;f)=(n+1)\int_0^\infty e^{-s\,t}f(t)\frac{\Phi_n(x,t)}{t-x}\,dt.
$$
Further, we consider the difference
$$
S_n(x;f)-f(x)=(n+1)\int_0^\infty e^{-s\,t}\,\frac{f(x)-f(t)}{x-t}\,\Phi_n(x,t)\,dt.
$$

\begin{theorem}The sequence $\{a_n(s)\}$ can be expressed in terms of the sequence $\{\f_n(s)\}$
defined by \eqref{1}, and the other way round
\begin{equation}\label{a-fi}
a_n(s)=\sum_{k=0}^n(-1)^{n-k}\binom nk s^{k+\frac12}\f_k(s),\qquad
\f_n(s)=\frac{\sqrt s}{s^{n+1}}\sum_{k=0}^n\binom nk a_k(s).
\end{equation}
\end{theorem}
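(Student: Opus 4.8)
The plan is to obtain the first identity by substituting the explicit formula for the Laguerre polynomials into the definition of $a_n(s)$, and to obtain the second by inverting the resulting finite triangular relation.

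First I would write $L_n(s\,x)=\sum_{k=0}^n(-1)^k\binom nk\dfrac{(s\,x)^k}{k!}$ and plug this into the expression for $a_n(s)$ in \eqref{expansion}. Since the sum over $k$ is finite, interchanging it with the integral is immediate, giving
$$
a_n(s)=(-1)^n\sqrt s\sum_{k=0}^n(-1)^k\binom nk s^k\int_0^\infty e^{-s\,x}\,\frac{x^k}{k!}\,f(x)\,dx .
$$
By \eqref{1} the integral is exactly $\f_k(s)$, and collecting $(-1)^n(-1)^k=(-1)^{n-k}$ together with $s^k\sqrt s=s^{k+\frac12}$ yields the left-hand formula in \eqref{a-fi}.

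For the right-hand formula I would regard the left one as a binomial transform: setting $b_k(s)=s^{k+\frac12}\f_k(s)$, it reads $a_n(s)=\sum_{k=0}^n(-1)^{n-k}\binom nk b_k(s)$. Classical binomial inversion then gives $b_n(s)=\sum_{k=0}^n\binom nk a_k(s)$, that is $s^{n+\frac12}\f_n(s)=\sum_{k=0}^n\binom nk a_k(s)$, and multiplying by $s^{-(n+\frac12)}=\sqrt s/s^{n+1}$ produces the claimed expression for $\f_n(s)$.

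Should one prefer not to quote binomial inversion, the same conclusion follows by substituting the (already proved) left identity into the right-hand side of the claimed one: after extracting the factor $\sqrt s/s^{n+1}$, the coefficient of $\f_j(s)$ in the emerging double sum is $s^{j-n}\sum_{k=j}^n(-1)^{k-j}\binom nk\binom kj$, which by $\binom nk\binom kj=\binom nj\binom{n-j}{k-j}$ and the binomial theorem equals $s^{j-n}\binom nj(1-1)^{n-j}=\delta_{nj}$, so the double sum collapses to $\f_n(s)$. The only place asking for a little care is this sign-and-index bookkeeping (equivalently, the justification of binomial inversion); there is no genuine analytic obstacle, the interchange of the finite sum with the integral being trivial.
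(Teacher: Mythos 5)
Your proposal is correct and follows essentially the same route as the paper: the first identity by substituting the explicit Laguerre expansion into the integral defining $a_n(s)$ and recognizing $\f_k(s)$ via \eqref{1}, and the second by binomial inversion of the resulting triangular relation. Your optional direct verification of the inversion via $\binom nk\binom kj=\binom nj\binom{n-j}{k-j}$ and $(1-1)^{n-j}=\delta_{nj}$ is a small self-contained addition, but the argument is the same one the paper gives.
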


\begin{proof}We prove the first equality of \eqref{a-fi}. Relying on \eqref{expansion} and then
referring to \eqref{1}, we obtain
{\arraycolsep0.11em
\begin{eqnarray*}
a_n(s)&=&(-1)^n\sqrt s\int_0^{+\infty}e^{-s\,x}f(x)\sum_{k=0}^n(-1)^k\binom nk\frac{(s\,x)^k}{k!}\,dx\\
&=&\sum_{k=0}^n(-1)^{n-k}\binom nk s^{k+\frac12}\int_0^{+\infty}e^{-s\,x}\,\frac{x^k}{k!}\,f(x)\,dx
=\sum_{k=0}^n(-1)^{n-k}\binom nk s^{k+\frac12}\f_k(s).
\end{eqnarray*}}

To derive the second equality, we consider the first equality in \eqref{a-fi} as the binomial transform
of the sequence $\{b_n(s)\}$, i.e.
$$
a_n(s)=\sum_{k=0}^n(-1)^{n-k}\binom nk b_k(s),\quad b_k(s)=s^{k+\frac12}\f_k(s).
$$
So its inverse transform is
$$
b_n(s)=\sum_{k=0}^n\binom nk a_k(s)\hskip0.17cm\Rightarrow\hskip0.17cm s^{n+\frac12}\f_n(s)
=\sum_{k=0}^n\binom nk a_k(s)\hskip0.17cm\Leftrightarrow\hskip0.17cm
\f_n(s)=\frac{\sqrt s}{s^{n+1}}\sum_{k=0}^n\binom nk a_k(s).
$$
whereby we complete the proof. \end{proof}

\section{The inverse Laplace-type transform}

Let $S_c$ be the space of sequences of functions $\{t_n(s)\}_{n\in\mathbb N_0}$ such that
\begin{equation}\label{s-c}
\sum_{k=0}^{+\infty}\Big\{\D_n^k\Big(s^n t_n(s)\Big)_{n=0}\Big\}^2<+\infty,
\end{equation}
where $\D_n$ is the forward difference operator
$$
\D_n t_n(s)=t_{n+1}(s)-t_n(s),\quad\D_n^kt_n(s)=\D_n(\D_n^{k-1}t_n(s))
=\sum_{j=0}^k(-1)^{k-j}\binom kj t_{n+j}(s).
$$
We shall show that the sequence $\{\f_n(s)\}_{n\in\mathbb N_0}$ defined by \eqref{1} satisfies \eqref{s-c},
i.e. belongs to the space $S_c$.

Taking account of \eqref{1} and \eqref{expansion}, we find
{\arraycolsep0.11em
\begin{eqnarray*}
\D_n^k\Big(s^n\f_n(s)\Big)_{n=0}&=&\sum_{j=0}^k(-1)^{k-j}\binom kj s^j\f_j(s)
=(-1)^k\sum_{j=0}^k\frac{(-1)^j}{j!}\binom kj s^j\int_0^\infty e^{-s\,x}x^jf(x)\,dx,\\
&=&(-1)^k\int_0^\infty e^{-s\,x}\,f(x)\sum_{j=0}^k\frac{(-1)^j}{j!}\binom kj(s\,x)^j\,dx,\\
&=&\frac{(-1)^k}{\sqrt s}\int_0^\infty e^{-s\,x}\,f(x)\sqrt s\,L_k(s\,x)\,dx=\frac{a_k(s)}{\sqrt s}.
\end{eqnarray*}
}so on this basis and the Bessel inequality \eqref{bess-ineq}, we finally have
$$
\sum_{k=0}^\infty\Big\{\D_n^k\Big(s^n\f_n(s)\Big)_{n=0}\Big\}^2
=\frac1s\sum_{k=0}^\infty a_k^2(s)<+\infty.
$$

We make use of the relations \eqref{a-fi} between orthogonal Laguerre functions \eqref{laguerre*} and
the Laplace-type transform to define a mapping $\rt: S_c\mapsto L_2(0,+\infty)$ as follows
\begin{equation}\label{7}
f(x)=\rt\{\f_n\}(x)=\sum_{k=0}^\infty\D_n^k\Big(s^n\f_n\Big(\frac s2\Big)\Big)_{n=0}\sqrt s\,L_{k,s}^*(x)%\\
\end{equation}

\begin{theorem}\label{thm4}The transforms $\gt$ and $\rt$ are inverse to each other, in other words,
if $\f_n(s)$ is obtained by $\gt\{f(t)\}(s)$, then $f(t)$ is obtained by $\rt\{\f_n(s)\}(t)$,
and the other way round, if \eqref{7} holds true, then $f(t)\risingdotseq\f_n(s)$.
\end{theorem}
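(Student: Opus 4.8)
The plan is to establish the two directions separately, exploiting the bridge provided by the Laguerre system. First I would prove that $\rt\circ\gt$ is the identity. Starting from $f(t)\in L_2(0,\infty;e^{-sx})$ of exponential order $r$, apply $\gt$ to produce $\f_n(s)$ via \eqref{1}. The computation preceding the theorem already shows
$$
\D_n^k\Big(s^n\f_n(s)\Big)_{n=0}=\frac{a_k(s)}{\sqrt s},
$$
where $a_k(s)$ are precisely the Fourier--Laguerre coefficients of $f$ in the basis $\{L_{k,s}^*\}$ from \eqref{expansion}. Substituting this into the definition \eqref{7} of $\rt$ (with the argument $s$ in place of $s/2$, or equivalently tracking the scaling carefully) turns the defining sum into $\sum_{k\ge 0} a_k(s)L_{k,s}^*(x)$, which by the Fourier expansion \eqref{expansion} converges to $f(x)$ in $L_2(0,\infty;e^{-sx})$. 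So $\rt\{\gt\{f\}\}=f$, and the membership $\{\f_n(s)\}\in S_c$ is exactly the Bessel-type estimate already verified in the text.

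For the reverse direction, I would start from an arbitrary sequence $\{\f_n(s)\}\in S_c$, form $c_k(s):=\D_n^k(s^n\f_n(s))_{n=0}$, and note $\sum_k c_k^2(s)<\infty$ by \eqref{s-c}, so that $f(x):=\rt\{\f_n\}(x)=\sum_k c_k(s)\sqrt s\,L_{k,s}^*(x)$ is a well-defined element of $L_2(0,\infty)$ (up to the normalisation constant, its Fourier coefficients are $\sqrt s\,c_k(s)$). Then I would apply $\gt$ to this $f$ and show the resulting sequence is again $\{\f_n(s)\}$. The key is that the map sending a sequence $\{s^n\f_n(s)\}_{n}$ to its iterated forward differences at $0$ is an involution-type (in fact self-inverse up to sign) operation: by the binomial inversion already used in the proof of Theorem on \eqref{a-fi}, one recovers
$$
s^n\f_n(s)=\sum_{k=0}^n\binom nk\D_n^k\Big(s^m\f_m(s)\Big)_{m=0}=\sum_{k=0}^n\binom nk c_k(s).
$$
On the other hand, applying $\gt$ to $f=\sum_k c_k\sqrt s\,L_{k,s}^*$ and using orthonormality together with the Laguerre expansion $s^nx^n/n!=\sum$ (the inverse of $L_k(sx)=\sum(-1)^j\binom kj (sx)^j/j!$, i.e. the binomial-inversion relation $(sx)^n/n!=\sum_{k\le n}(-1)^k\binom nk L_k(sx)$ reorganised) yields the same finite binomial combination of the $c_k(s)$. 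Matching these two expressions gives back $\f_n(s)$, hence $\gt\{\rt\{\f_n\}\}=\{\f_n\}$, which is the claim $f(t)\risingdotseq\f_n(s)$.

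The main obstacle will be the bookkeeping around the argument $s$ versus $s/2$ in the definition \eqref{7}, together with the $\sqrt s$ normalisation factors, making sure the two compositions really land on the identity rather than on a rescaled version. Concretely, one must check that the weight $e^{-sx}$ implicit in $L_{k,s}^*$ is compatible with the weight appearing when $\gt$ is applied, and that the exponential-order hypothesis $|f(x)|<Me^{rx}$ with $s>2r$ is exactly what guarantees $f\in L_2(0,\infty;e^{-sx})$ so that Parseval/Bessel applies; this is where convergence of all the interchanged sums and integrals is justified (citing the dominated-convergence argument already implicit after \eqref{expansion}). Once the scaling is pinned down, both directions reduce to: (i) the Fourier--Laguerre expansion theorem in $L_2(0,\infty;e^{-sx})$, and (ii) the elementary binomial inversion $a_n=\sum(-1)^{n-k}\binom nk b_k\iff b_n=\sum\binom nk a_k$, both of which are available from the earlier parts of the paper.
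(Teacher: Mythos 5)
Your proposal is correct and follows essentially the same route as the paper: the forward direction identifies $\D_n^k(s^n\f_n(s))_{n=0}$ with the Fourier--Laguerre coefficients $a_k(s)/\sqrt s$ and invokes the expansion \eqref{expansion}, while the reverse direction applies $\gt$ to the Laguerre series and recovers $\f_n(s)$ through the moment integral $\int_0^\infty e^{-t}t^nL_k(t)\,dt=(-1)^k n!\binom nk$ combined with binomial inversion, exactly as in the paper. Your remark about the $s$ versus $s/2$ and $\sqrt s$ normalisation in \eqref{7} correctly flags an inconsistency that the paper's own proof silently ignores by working with $\f_n(s)$ throughout.
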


\begin{proof}Let $f(t)\risingdotseq\f_n(s)$. Then the first relation in \eqref{a-fi} yields
$$
a_k(s)%=(-1)^k\sqrt s\sum_{j=0}^k(-s)^j\binom kj\f_j(s)
=\sqrt s\sum_{j=0}^k(-1)^{k-j}\binom kj s^j\f_j(s)=\sqrt s\,\D_n^k(s^n\f_n(s))\mid_{n=0}.
$$
Considering that $a_k(s)$ are coefficients in the expansion of $f(x)$ in terms of Laguerre polynomials
$$
f(x)=\sum_{k=0}^\infty a_k(s)L_{k,p}^*(x)=\sqrt s
\sum_{k=0}^\infty\D_n^k(s^n\f_n(s))\mid_{n=0}L_{k,s}^*(x)=\rt\{\f_n\}(x),
$$
we prove the first direction.

Conversely, let $\rt\{\f_n\}(x)=f(x)$. From \eqref{7}, and in view of \eqref{laguerre*}, there follows
{\arraycolsep0.11em
\begin{eqnarray*}
\f_n(s)&=&\int_0^\infty e^{-px}x^nf(x)\,dx
=\sqrt s\sum_{k=0}^\infty\D_n^k(s^n\f_n(s))_{n=0}\int_0^\infty e^{-s\,x}x^nL_{k,s}^*(x)\,dx\\
&=&s\sum_{k=0}^\infty (-1)^k\D_n^k(s^n\f_n(s))_{n=0}\int_0^\infty x^n e^{-px}L_k(sx)\,dx\\
&=&\frac1{s^n}\sum_{k=0}^\infty (-1)^k\D_n^k(s^n\f_n(s))_{n=0}\int_0^\infty (st)^n e^{-px}L_k(sx)\,d(sx).
\end{eqnarray*}
}However, introducing the substitution $t=s\,x$ in the last integral, it becomes
$$
\int_0^\infty e^{-t}t^nL_k(t)\,dt=\int_0^\infty e^{-st}t^n\Big(\sum_{j=0}^k(-1)^j\binom kj\frac{t^j}{j!}\Big)\,dt
=\sum_{j=0}^k\frac{(-1)^j}{j!}\binom kj\int_0^\infty e^{-t}t^{n+j}\,dt,
$$
but the right-hand side integral is actually $\G(n+j+1)=(n+j)!$, and we have
{\arraycolsep0.11em
\begin{eqnarray*}
\int_0^\infty e^{-t}t^nL_k(t)\,dt&=&\sum_{j=0}^k\frac{(-1)^j}{j!}\binom kj(n+j)!
=n!\sum_{j=0}^k(-1)^j\binom kj\binom{n+j}j\\
&=&(-1)^kn!\sum_{j=0}^k(-1)^{k-j}\binom kj\binom{n+j}j=(-1)^k n!\binom nk,\quad k\leqslant n,
\end{eqnarray*}
}so that one obtains
$$
\gt\{f(t)\}(s)=\frac1{n!}\int_0^\infty e^{-s\,x}x^nf(x)\,dx=\frac1{s^n}\sum_{k=0}^n\binom nk\D^k(s^n\f_n(s))_{n=0}
=\frac1{s^n}s^n\f_n(s)=\f_n(s).
$$
Hereby we complete the proof. \end{proof}

We make use of the formula for the inverse Laplace transform (see \cite{Mitr})
\begin{equation}\label{resudue}
f(t)=\frac1{2\pi i}\int_{c-i\infty}^{c+i\infty}e^{s\,t}F(s)ds=\sum_{j=0}^k\res_{s=s_j}e^{s\,t}F(s),
\end{equation}
to prove

\begin{lemma}The inverse Laplace-type transform $\rt$ is determined by the equality
$$
f(t)=\frac{n!}{t^n}\sum_{j=0}^k\res_{s=s_j}e^{s\,t}\f_n(s).
$$
\end{lemma}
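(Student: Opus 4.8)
The plan is to recognize $\f_n(s)$, for each fixed $n\in\mathbb N_0$, as an ordinary Laplace transform and then invert it by means of the residue formula \eqref{resudue}. Indeed, from the defining relation \eqref{1} we have
$$
\f_n(s)=\int_0^\infty e^{-s\,t}\,\frac{t^n}{n!}\,f(t)\,dt=\lt\Big\{\frac{t^n}{n!}f(t)\Big\}(s),
$$
so that $\f_n(s)$ is precisely the Laplace transform of the function $g(t)=\dfrac{t^n}{n!}f(t)$. Thus, up to the elementary factor $t^n/n!$, recovering $f$ from $\{\f_n\}$ is the classical problem of inverting a Laplace transform.

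First I would verify that $g(t)$ meets the hypotheses required for \eqref{resudue}. Since $f$ is piecewise continuous on $[0,\infty)$ of exponential order $r$, multiplication by the polynomial $t^n/n!$ does not change the exponential type (only the constant $M$ in the bound $|g(t)|<Me^{rt}$), so $g$ is again piecewise continuous of exponential order $r$; its Laplace transform $\f_n(s)$ is then analytic for $\re s>r$ and admits a meromorphic continuation with isolated singularities $s_0,\dots,s_k$ lying to the left of the Bromwich line $\re s=c$. Applying \eqref{resudue} to $g$ gives
$$
g(t)=\frac1{2\pi i}\int_{c-i\infty}^{c+i\infty}e^{s\,t}\f_n(s)\,ds=\sum_{j=0}^k\res_{s=s_j}e^{s\,t}\f_n(s).
$$

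Finally, since $g(t)=t^nf(t)/n!$, for $t>0$ I would simply divide by $t^n/n!$ to solve for $f(t)$, obtaining
$$
f(t)=\frac{n!}{t^n}\,g(t)=\frac{n!}{t^n}\sum_{j=0}^k\res_{s=s_j}e^{s\,t}\f_n(s),
$$
which is the asserted formula. By Theorem \ref{thm4} this $f$ is exactly the function with $f(t)\risingdotseq\f_n(s)$, so $\rt\{\f_n\}$ is represented in this way.

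The only delicate point — and the step I expect to need the most care — is the justification of the residue representation itself: one must argue that the contributions of the large circular arcs in the Bromwich contour vanish (a Jordan-type estimate on $\f_n(s)$), and that the singularities $s_j$ of $\f_n(s)$ are precisely those inherited from $F(s)=\lt\{f(t)\}(s)$ through $\f_n(s)=(-1)^nF^{(n)}(s)/n!$ (Lemma \ref{lm1}), so that the finite sum over $j$ is in fact independent of $n$. Everything else reduces to the routine algebra of clearing the factor $t^n/n!$.
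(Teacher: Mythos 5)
Your proposal is correct and follows essentially the same route as the paper: identify $n!\,\f_n(s)$ as the Laplace transform of $t^n f(t)$, invert it by the residue formula \eqref{resudue}, divide by $t^n/n!$, and invoke Theorem \ref{thm4} to identify the result with $\rt\{\f_n\}$. Your extra remarks on verifying the exponential order of $t^nf(t)/n!$ and on the Jordan-type estimate are a welcome tightening of a step the paper leaves implicit, but they do not change the argument.
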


\begin{proof}By virtue of $n!f(t)\risingdotseq n!\f_n(s)$, and relying on {\rm Lemma \ref{lm1}},
we have already seen that $n!\f_n(s)=\lt\{t^n f(t)\}(s)$, whence we have $\lt^{-1}\{n!\f_n(s)\}(t)=t^nf(t)$.
Referring to \eqref{resudue}, we find
$$
t^nf(t)=\frac1{2\pi i}\int_{c-i\infty}^{c+i\infty}e^{st}{n!}\f_n(s)ds
={n!}\sum_{j=0}^k\res_{s=s_j}e^{st}\f_n(s).
$$
However, taking into account {\rm Theorem \ref{thm4}}, we conclude that $f(t)=\rt\{\f_n(s)\}(t)$, i.e.
$$
\rt\{\f_n(s)\}(t)=\frac{n!}{t^n}\sum_{j=0}^k\res_{s=s_j}e^{st}\f_n(s),
$$
whereby we prove the lemma. \end{proof}

\section{The Laplace-type transform and a backward\\ difference operator}

For a sequence $\{\f_n(s)\}$, the backward shift operator $E^{-1}$ is defined by $\f_{n-1}(s)=E^{-1}\f_n(s)$.
We define the generalized backward difference operator $\nabla_s$ as
\begin{equation}\label{nabla-s}
\nabla_s\f_n(s)=s\f_n(s)-\f_{n-1}(s),
\end{equation}
which for $s=1$ reduces to the backward difference operator $\nabla\f_n=\f_n-\f_{n-1}$, 
with $\f_n=\f_n(1)$.

By writing \eqref{nabla-s} in the form of
$$
\nabla_s\f_n(s)=s\f_n(s)-E^{-1}\f_n(s)=(sI-E^{-1})\f_n(s),
$$
with $I$ as identity operator, i.e. $I_s\f_n(s)=\f_n(s)$, for the power $\nabla_s^p\f_n(s)$
($p\in\mathbb N$), we have
{\arraycolsep0.11em
\begin{eqnarray*}
\nabla_s^p\f_n(s)&=&(s I-E^{-1})^p\f_n(s)=\sum_{k=0}^p(-1)^k\binom pk s^{p-k}(E^{-1})^k\f_n(s)\\
&=&\sum_{k=0}^p(-1)^k\binom pk s^{p-k}E^{-k}\f_n(s)=\sum_{k=0}^p(-1)^k\binom pk s^{p-k}\f_{n-k}(s).
\end{eqnarray*}}On this basis, starting with $\f_{n-1}(s)=s\f_n(s)-\nabla_s\f_n(s)$, and using
the method of mathematical induction, we can prove that $\f_{n-p}(s)$ is expressed in terms of
the powers $\nabla_s\f_n(s)$
\begin{equation}\label{fn-p-nabla-p-fn}
\f_{n-p}(s)=s^p\f_n(s)-\binom p1s^{p-1}\nabla_s\f_n(s)+\binom p2s^{p-2}\nabla_s^2\f_n(s)+\cdots+(-1)^p\nabla_s^p\f_n(s),
%=\sum_{j=0}^p(-1)^j\binom pjs^{p-j}\nabla_s^j f_n
\end{equation}
and we notice that $\nabla_s^p\f_n(s)$ for $s=1$ becomes $\nabla^p f_n$.

\begin{theorem}\label{th2}Let $f(t)\risingdotseq\f_n(s)$. Then there holds
$$
f^{(s)}(t)\risingdotseq
\left\{\begin{array}{l}
\nabla_s^p\f_n(s),\quad n\geqslant p\\
\nabla_s^p\f_n-(-1)^n\dsum_{j=1}^{p-n}\binom{p-j}n s^{p-j}f^{(j-1)}(0),\quad n\leqslant p-1.
\end{array}\right.
$$
%In other words, $p${\rm th} derivative of a function $f$ is mapped by the Laplace-type transform $\gt$
%to $p${\rm th} backward difference of the $\f_n$ defined by \eqref{1}, if $p\leqslant n$.
\end{theorem}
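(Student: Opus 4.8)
The plan is to reduce the statement to the classical rule for the Laplace transform of a derivative, combined with {\rm Lemma \ref{lm1}}, rather than to integrate \eqref{1} by parts $p$ times directly (that route also works and is indicated at the end). Throughout, let $\psi_n(s)\fallingdotseq f^{(p)}(t)$ denote the Laplace-type transform of the $p$th derivative, so the assertion is a closed form for $\psi_n(s)$.

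First I would apply {\rm Lemma \ref{lm1}} to $f^{(p)}$, which is again piecewise continuous of exponential order under the natural regularity hypotheses on $f$: this gives $n!\,\psi_n(s)=\lt\{t^nf^{(p)}(t)\}(s)=(-1)^n\dfrac{d^n}{ds^n}\lt\{f^{(p)}(t)\}(s)$. Into this I substitute the standard identity $\lt\{f^{(p)}(t)\}(s)=s^pF(s)-\sum_{j=1}^p s^{p-j}f^{(j-1)}(0)$ with $F(s)=\lt\{f(t)\}(s)=\f_0(s)$, and then differentiate the two pieces $n$ times. For $s^pF(s)$ the Leibnitz rule yields $\dfrac{d^n}{ds^n}\big(s^pF(s)\big)=\sum_{k=0}^{\min(n,p)}\binom nk\dfrac{p!}{(p-k)!}\,s^{p-k}F^{(n-k)}(s)$; rewriting $(-1)^{n-k}F^{(n-k)}(s)/(n-k)!=\f_{n-k}(s)$ by {\rm Lemma \ref{lm1}} once more and cancelling the factorials, the contribution of this term to $\psi_n(s)$ is exactly $\sum_{k=0}^{\min(n,p)}(-1)^k\binom pk s^{p-k}\f_{n-k}(s)$. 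For $n\geqslant p$ this is the full binomial sum, equal to $\nabla_s^p\f_n(s)$ by the expansion obtained just before \eqref{fn-p-nabla-p-fn}; for $n\leqslant p-1$ it is the same sum truncated at $k=n$ (there being no $\f_{n-k}$ of negative index), which is the reading $\nabla_s^p\f_n$ must carry in that range. For the finite part $\sum_{j=1}^p s^{p-j}f^{(j-1)}(0)$, the $n$-fold $s$-derivative kills every summand with $p-j<n$ and turns the remaining ones into multiples of $s^{p-n-j}$ with coefficient $(p-j)!/(p-n-j)!$; after dividing by $n!$ and using $(p-j)!/\big(n!\,(p-n-j)!\big)=\binom{p-j}n$, this yields the correction $(-1)^n\sum_{j=1}^{p-n}\binom{p-j}n s^{p-n-j}f^{(j-1)}(0)$, which is vacuous when $n\geqslant p$. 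Assembling the two contributions gives the stated two-case formula.

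The one delicate point is the low-index range $n\leqslant p-1$: since $\f_{n-k}(s)$ is defined only for $k\leqslant n$, the symbol $\nabla_s^p\f_n$ there must be understood as the truncated binomial sum, and one has to keep careful track of which boundary values $f^{(j-1)}(0)$ survive the $n$-fold differentiation and with what power of $s$ (the computation produces $s^{p-n-j}$, which reduces to $s^{p-j}$ precisely when $n=0$). As an independent check one may instead integrate \eqref{1} by parts once: for $n\geqslant1$ this gives $\psi_n(s)=\nabla_s\,\f_n^{[p-1]}(s)$, while for $n=0$ it gives $\psi_0(s)=s\,\f_0^{[p-1]}(s)-f^{(p-1)}(0)$, where $\f_n^{[p-1]}\fallingdotseq f^{(p-1)}(t)$; iterating this recursion $p$ times reproduces $\nabla_s^p\f_n$ when $n\geqslant p$ and, when $n\leqslant p-1$, accumulates exactly the boundary terms above, which one confirms by an auxiliary induction on $p$.
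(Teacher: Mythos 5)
Your argument is essentially the paper's own proof: apply Lemma \ref{lm1} to $f^{(p)}$, insert the classical formula for $\lt\{f^{(p)}\}$, expand $\frac{d^n}{ds^n}\big(s^pF(s)\big)$ by the Leibnitz rule to recover $\nabla_s^p\f_n(s)$, and differentiate the boundary sum to get the correction term when $n\leqslant p-1$. Your two side remarks — that $\nabla_s^p\f_n$ must be read as the truncated sum for $n\leqslant p-1$ (the paper handles this via the convention $1/\Gamma(n-j+1)=0$ for $j>n$) and that the computation actually produces $s^{p-j-n}$ rather than the $s^{p-j}$ printed in the theorem statement — are both consistent with what the paper's own derivation yields.
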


\begin{proof}We use the Laplace transform of the $p$th derivative of the function $f$, that is
\begin{equation}\label{laplace-p}
\lt\{f^{(s)}(t)\}(s)=s^p\lt\{f(t)\}(s)-\sum_{j=1}^p s^{p-j}f^{(j-1)}(0).
\end{equation}
By virtue of this and \eqref{3}, there follows
{\arraycolsep0.11em
\begin{eqnarray*}
f^{(s)}(t)&\risingdotseq&\frac{(-1)^n}{n!}\lt\{f^{(s)}(t)\}^{(n)}(s)\\
&=&\frac{(-1)^n}{n!}\frac{d^n}{ds^n}\Big(s^p\lt\{f(t)\}(s)\Big)
-\frac{(-1)^n}{n!}\frac{d^n}{ds^n}\sum_{j=1}^p s^{p-j}f^{(j-1)}(0).
\end{eqnarray*}}However, for $n\geqslant p$ there holds
$$
\frac{d^n}{ds^n}\sum_{j=1}^p s^{p-j}f^{(j-1)}(0)=0,
$$
so in this case, we find
{\arraycolsep0.11em
\begin{eqnarray*}
f^{(s)}(t)&\risingdotseq&\frac{(-1)^n}{n!}\big(s^p\lt\{f(t)\}(s)\big)^{(n)}
=\frac{(-1)^n}{n!}\sum_{j=0}^n\binom nj(s^p)^{(j)}\lt\{f(t)\}^{(n-j)}(s)\\
&=&\sum_{j=0}^n(-1)^j\frac{p^{(j)}}{j!}s^{p-j}\frac{(-1)^{n-j}}{(n-j)!}\lt\{f(t)\}^{(n-j)}(s)
=\sum_{j=0}^p(-1)^j\binom pj s^{p-j}\f_{n-j}(s)\\
&=&\nabla_s^p\f_n(s),
\end{eqnarray*}
}where $p^{(j)}$ stands for the falling factorial, i.e. $p^{(j)}=p(p-1)\cdots(p-j+1)$. Taking $s=1$,
we obtain $f^{(s)}(t)\risingdotseq\nabla^p f_n$.

Otherwise, for $n\leqslant p-1$, we have
{\arraycolsep0.11em
\begin{eqnarray*}
\gt\{f^{(s)}(t)\}(s)&=&\frac{(-1)^n}{n!}\frac{d^n}{ds^n}\Big(s^p\lt\{f(t)\}(s)\Big)
-\frac{(-1)^n}{n!}\frac{d^n}{ds^n}\sum_{j=1}^p s^{p-j}f^{(j-1)}(0)\\
&=&\sum_{j=0}^n(-1)^j\frac{p^{(j)}}{j!}s^{p-j}\frac{(-1)^{n-j}}{(n-j)!}\lt\{f(t)\}^{(n-j)}(s)\\
&&\hskip3cm-(-1)^n\sum_{j=1}^{p-n}\frac{(p-j)^{(n)}}{n!}s^{p-j-n}f^{(j-1)}(0).%\\
\end{eqnarray*}}The Gamma function \eqref{gamma-f} often fills in the values of the factorial, and can be
analytically extended to the whole complex plane, except for the non-positive integers, where it has poles.
However, its reciprocal function has there a value of zero. So, this allows the summation index $j$ in the first sum to run up to $p$ because the values of $1/(n-j)!=1/\Gamma(n-j+1)$ for $j=n+1,\dots,p$ are equal to zero! In that case, this sum is actually $\nabla_s^p\f_n(s)$, so that we have
$$
f^{(s)}(t)\risingdotseq\nabla_s^p\f_n(s)-(-1)^n\sum_{j=1}^{p-n}\binom{p-j}n s^{p-j-n}f^{(j-1)}(0),
$$
%&=&\nabla_s^k\f_n(s)-\frac{(-1)^n}{s^n}\sum_{j=1}^k\binom{k-j}ns^{k-j}f^{(j-1)}(0),
whereby we complete the proof. \end{proof}

\begin{lemma}If $f(t)\risingdotseq\f_n(s)$ and $r\in\mathbb N$, then $t^rf(t)\risingdotseq\nabla_s^p\f_{n+r}(s)$
for $r+n\geqslant p$. If $r+n\leqslant p-1$ there holds %\label{lm4}
$$
t^rf^{(s)}(t)\risingdotseq(n+1)_r\Big(\nabla_s^p\f_{n+r}(s)-(-1)^{n+r}
\sum_{j=1}^{p-n-r}\binom{p-j}{n+r}s^{p-j-n-r}f^{j-1}(0)\Big).
$$
where $(n+1)_r=(n+1)(n+2)(n+3)\cdots(n+r),\,r\in\mathbb N$. In the special case, for $p=0$, we have
$$
t^rf(t)\risingdotseq(n+1)_r\f_{n+r}(s).
$$
\end{lemma}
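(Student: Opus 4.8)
The plan is to reduce everything to a single ``multiplication by $t^{r}$'' rule for the Laplace-type transform and then to feed in Theorem \ref{th2}. First I would record the auxiliary identity: if $h(t)\risingdotseq\chi_{n}(s)$ in the sense of \eqref{1}, then
$$
t^{r}h(t)\risingdotseq(n+1)_{r}\,\chi_{n+r}(s).
$$
This is immediate from the definition \eqref{1}, since
$$
\frac1{n!}\int_{0}^{\infty}e^{-st}t^{n}\bigl(t^{r}h(t)\bigr)\,dt
=\frac1{n!}\int_{0}^{\infty}e^{-st}t^{n+r}h(t)\,dt
=\frac{(n+r)!}{n!}\,\chi_{n+r}(s)=(n+1)_{r}\,\chi_{n+r}(s),
$$
the integral converging because multiplying by the polynomial $t^{r}$ raises the exponential order of $h$ by an arbitrarily small amount ($t^{r}\leqslant C_{\varepsilon}e^{\varepsilon t}$); alternatively one gets the same by a one-line induction on $r$ from the case $r=1$.

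Next I would apply this with $h=f^{(p)}$. Theorem \ref{th2} tells us the image of $f^{(p)}$ at an index $m$: it equals $\nabla_{s}^{p}\f_{m}(s)$ when $m\geqslant p$, and $\nabla_{s}^{p}\f_{m}(s)-(-1)^{m}\sum_{j=1}^{p-m}\binom{p-j}{m}s^{p-j-m}f^{(j-1)}(0)$ when $m\leqslant p-1$. Setting $m=n+r$ and multiplying by $(n+1)_{r}$ therefore gives, for $r+n\geqslant p$,
$$
t^{r}f^{(p)}(t)\risingdotseq(n+1)_{r}\,\nabla_{s}^{p}\f_{n+r}(s),
$$
and, for $r+n\leqslant p-1$,
$$
t^{r}f^{(p)}(t)\risingdotseq(n+1)_{r}\Bigl(\nabla_{s}^{p}\f_{n+r}(s)-(-1)^{n+r}\sum_{j=1}^{p-n-r}\binom{p-j}{n+r}s^{p-j-n-r}f^{(j-1)}(0)\Bigr),
$$
which are exactly the two claimed formulas, the dichotomy $r+n\geqslant p$ versus $r+n\leqslant p-1$ being precisely $m\geqslant p$ versus $m\leqslant p-1$. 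The special case $p=0$ then follows at once: there $f^{(0)}=f$, $\nabla_{s}^{0}=I$, and $r+n\geqslant 0=p$ always holds, so $t^{r}f(t)\risingdotseq(n+1)_{r}\f_{n+r}(s)$.

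The argument is largely bookkeeping, so I do not anticipate a genuine obstacle. The two points needing a little care are that the hypotheses must propagate — $f^{(p)}$ has to be piecewise continuous of exponential order for Theorem \ref{th2} to apply and for $t^{r}f^{(p)}$ to lie in the domain of $\gt$, which is already the standing assumption behind Theorem \ref{th2} — and that, when $r+n<p$, the symbol $\nabla_{s}^{p}\f_{n+r}(s)=\sum_{k=0}^{p}(-1)^{k}\binom pk s^{p-k}\f_{n+r-k}(s)$ involves terms with negative index $n+r-k$; these are read as $0$ via the convention $1/(n+r-k)!=1/\Gamma(n+r-k+1)=0$ used throughout the paper, so they simply drop out.
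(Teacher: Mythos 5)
Your proposal is correct and reaches the same conclusion as the paper, but by a cleaner route. The paper proves the index-shift step indirectly: it writes $\lt\{t^r g\}(s)=(-1)^r\frac{d^r}{ds^r}\lt\{g\}(s)$, combines the $r$ extra derivatives with the $n$ derivatives from Lemma \ref{lm1} to get $\frac{(-1)^{n+r}}{n!}\frac{d^{n+r}}{ds^{n+r}}\lt\{f^{(p)}\}(s)$, and then re-runs the entire Leibniz-rule expansion from the proof of Theorem \ref{th2} with $n+r$ in place of $n$. You instead prove the identity
$$
t^{r}h(t)\risingdotseq(n+1)_{r}\,\chi_{n+r}(s)
$$
in one line directly from the defining integral \eqref{1} and then invoke Theorem \ref{th2} at index $m=n+r$ as a black box; this avoids duplicating the Leibniz computation and makes the dichotomy $n+r\geqslant p$ versus $n+r\leqslant p-1$ transparent. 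Your observations about propagating the exponential-order hypothesis and about the vanishing of terms with negative index via $1/\Gamma(n+r-k+1)=0$ are exactly the points the paper glosses over. One further remark in your favour: your version correctly keeps the factor $(n+1)_r$ in the case $n+r\geqslant p$, i.e. $t^{r}f^{(p)}(t)\risingdotseq(n+1)_{r}\nabla_s^p\f_{n+r}(s)$, whereas the lemma as stated (and the final line of the paper's own derivation) silently drops both this factor and the derivative, writing $t^rf(t)\risingdotseq\nabla_s^p\f_{n+r}(s)$ — an internal inconsistency of the paper, not of your argument, as the paper's own next sentence ("Setting $s=1$ gives $t^rf^{(p)}(t)\risingdotseq(n+1)_r\nabla^pf_{n+r}$") confirms.
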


\begin{proof} Making use of one of the basic properties of
the Laplace transform %that we dealt with in the proof of Lemma \ref{lm1}
$$
\lt\{t^r g(t)\}(s)=(-1)^r\frac{d^r}{ds^r}\lt\{g(t)\}(s),
$$
whence, after replacing $g(t)$ with $f^{(s)}(t)$ and applying \eqref{3}, we obtain
{\arraycolsep0.11em
\begin{eqnarray*}
t^rf^{(s)}(t)&\risingdotseq&\frac{(-1)^n}{n!}\frac{d^n}{ds^n}\lt\{t^r f^{(s)}(t)\}(s)
=\frac{(-1)^n}{n!}\frac{d^n}{ds^n}(-1)^r\frac{d^r}{ds^r}\lt\{f^{(s)}(t)\}\\
&=&\frac{(-1)^{n+r}}{n!}\frac{d^{r+n}}{ds^{r+n}}\lt\{f^{(s)}(t)\}(s)
=(n+1)_r\frac{(-1)^{n+r}}{(n+r)!}\frac{d^{r+n}}{ds^{r+n}}\lt\{f^{(s)}(t)\}(s).
\end{eqnarray*}}For $p=0$, on the basis of Lemma \ref{lm1}, one obtains  the equality
$t^rf(t)\risingdotseq(n+1)_r\f_{n+r}(s)$. Let $p\geqslant 1$. In view of \eqref{laplace-p},
we find
$$
t^rf^{(s)}(t)\risingdotseq
(n+1)_r\frac{(-1)^{n+r}}{(n+r)!}\left(\frac{d^{r+n}}{ds^{r+n}}\Big(s^p\lt\{f(t)\}(s)\Big)
-\frac{d^{r+n}}{ds^{r+n}}\sum_{j=1}^p s^{p-j}f^{(j-1)}(0)\right).
$$
If $n+r\geqslant p$, the $(n+r)$th derivative of the second sum becomes zero, so there remains
{\arraycolsep0.11em
\begin{eqnarray*}
t^rf^{(s)}(t)&\risingdotseq&(n+1)_r\frac{(-1)^{n+r}}{(n+r)!}\sum_{j=0}^{r+n}\binom{r+n}j(s^p)^{(j)}\lt\{f(t)\}^{(r+n-j)}(s)\\
&=&(n+1)_r\sum_{j=0}^{r+n}(-1)^j\frac{(s^p)^{(j)}}{j!}(-1)^{n+r-j}\frac{\lt\{f(t)\}^{(r+n-j)}(s)}{(r+n-j)!}\\
&=&(n+1)_r\sum_{j=0}^{r+n}(-1)^j\frac{p^{(j)}}{j!} s^{p-j}\f_{r+n-j}(s)\\
&=&(n+1)_r\sum_{j=0}^p(-1)^j\binom pj s^{p-j}\f_{r+n-j}(s)=\nabla_s^p\f_{r+n}(s).
\end{eqnarray*}}Setting $s=1$ gives rise to the equality $t^rf^{(s)}(t)\risingdotseq(n+1)_r\nabla^pf_{n+r}$.

If $n+r\leqslant p-1$, there follows
{\arraycolsep0.09em
\begin{eqnarray*}
t^r f^{(s)}(t)&\risingdotseq&(n+1)_r\sum_{j=0}^{r+n}(-1)^j\binom pj s^{p-j}\f_{r+n-j}(s)
-\frac{(-1)^{n+r}}{n!}\frac{d^{r+n}}{ds^{r+n}}\sum_{j=1}^p s^{p-j}f^{(j-1)}(0)\\
&=&(n+1)_r\sum_{j=0}^{r+n}(-1)^j\binom pj s^{p-j}\f_{r+n-j}(s)\\
&&\hskip3cm-(-1)^{n+r}\sum_{j=1}^{p-r-n}\frac{(p-j)^{(r+n)}}{n!}s^{p-j-r-n}f^{(j-1)}(0)\\
&=&(n+1)_r\sum_{j=0}^{r+n}(-1)^j\binom pj s^{p-j}\f_{r+n-j}(s)\\
&&\hskip2cm-(-1)^{n+r}(n+1)_r\sum_{j=1}^{p-r-n}\frac{(p-j)^{(r+n)}}{(r+n)!}s^{p-j-r-n}f^{(j-1)}(0).%\\
\end{eqnarray*}
}As in the proof of Theorem \ref{th2}, we allow $j$ in the first sum to run up to $r+n$, so that we obtain
$$
t^rf^{(s)}(t)\risingdotseq(n+1)_r\Big(\nabla_s^p\f_{r+n}(s)
-(-1)^{n+r}\sum_{j=1}^{p-r-n}\binom{p-j}{r+n}s^{p-j-r-n}f^{(j-1)}(0)\Big),
$$
which was to be proved. \end{proof}

\section{The Laplace-type transform and fractional\\ derivative}

For $m\leqslant\a<m+1$, the Riemann-Liouville definition the \textbf{fractional derivative} of
a function $f(t)$ is (see \cite{Podlubny})
\begin{equation}\label{diff-alpha}
D^\a f=\frac1{\G(1-\a)}\frac{d^{m+1}}{dt^{m+1}}\int_0^t(t-\t)^{m-\a}f(\t)\,d\t,
\end{equation}
where $D$ is a differential operator. By performing repeatedly integration by parts and differentiation,
this gives
$$
D^\a f=\sum_{k=0}^m\frac{t^{k-\a}f^{(k)}(0)}{\G(k-\a+1)}
+\frac1{\G(m-\a+1)}\int_0^t(t-\t)^{m-\a}f^{(m+1)}(\t)\,d\t.
$$

Let $m=0$ and $g(t)$, denote the integral in \eqref{diff-alpha}, and $G(s)=\lt\{g\}(s)$.
Knowing that $\lt\{g'\}(s)=s\,G(s)-g(0+)$, and because $g(0+)=0$, we find
$$
\lt\{D^\a f\}(s)=\frac{s\,G(s)}{\G(1-\a)}=\frac{s}{\G(1-\a)}\lt\{t^{-\a}*f\}(s)=s\lt\{t^{-\a}\}\lt\{f\}(s)=s^\a F(s).
$$

Replacing $f(t)$ with $D^\a f(t)$ in \eqref{1}, according to \eqref{3}, we obtain %$\lt(D^\a f(t))=s^\a F(s)$,
\begin{equation}\label{4}
\begin{array}{r@{\,=\,}l}
\gt\{D^\a f\}(s)=\dfrac{(-1)^n}{n!}(s^\a F(s))^{(n)}
&\dfrac{(-1)^n}{n!}\dsum_{k=0}^n\binom nk(s^{\a})^{(k)}F^{(n-k)}(s)\\
&\dfrac{(-1)^n}{n!}\dsum_{k=0}^n\binom nk\a^{(k)}s^{\a-k}F^{(n-k)}(s).
\end{array}
\end{equation}
where $\a^{(k)}=\a(\a-1)\cdots(\a-k+1)$ is \textbf{falling factorial}.
After rearranging \eqref{4}, and relying on \eqref{3}, we rewrite it as follows
$$
\gt\{D^\a f\}(s)=\sum_{k=0}^n(-1)^k\frac{\a^{(k)}}{k!}
s^{\a-k}\frac{(-1)^{n-k}}{(n-k)!}F^{(n-k)}(s)=\sum_{k=0}^n(-1)^k\binom{\a}{k}s^{\a-k}\f_{n-k}(s).
$$
So, the Laplace-type transform maps the fractional derivative $D^\a f$ to a sum in terms of the sequence $\{\f_n(s)\}$.

\section{Applications}

We shall demonstrate how by applying the Laplace-type transform one can solve difference equations and obtain combinatorial
identities based on orthogonal polynomials.

\subsection{Solving difference equations}

The Laplace-type transform $\gt$ and its inverse transform $\rt$ we can use to solve some difference equations.

\begin{example}\sl Fibonacci numbers are solutions of the difference equation $F_n=F_{n-1}-F_{n-2}$.
So, we deal first with the equation
$$
\f_n(s)-\f_{n-1}(s)-\f_{n-2}(s)=0,
$$
and by using \eqref{fn-p-nabla-p-fn}, it is transformed into
$$
\f_n(s)-s\f_n(s)+\nabla_s\f_n(s)-s^2\f_n(s)+2s\nabla_s\f_n(s)-\nabla_s^2\f_n(s)=0
$$
or
$$
(1-s-s^2)\f_n(s)+(1+2s)\nabla_s\f_n(s)-\nabla_s^2\f_n(s)=0
$$
where $Q_2(s)=1-s-s^2$. By setting $s=1$, it leads to the difference equation
$$
F_n-3\nabla F_n+\nabla^2F_n=0
$$
that is a result of applying {\rm Theorem \ref{th2}}, i.e. $\gt\{y(t)\}=F_n$, $\gt\{y'\}=\nabla f_n$,
$\gt\{y''\}=\nabla^2 F_n$, to the differential equation
$$
y''-3y'+y=0.
$$

We shall solve Cauchy's problem $y(0)=0\quad y'(0)=1$. At first, we find its general solution
$$
y=C_1e^{\frac{3+\sqrt5}2t}+C_2e^{\frac{3-\sqrt5}2t},
$$
and taking into account Cauchy's conditions, we obtain $C_1=\frac1{\sqrt5}$,
$C_2=-\frac1{\sqrt5}$, and
$$
y=\frac1{\sqrt5}e^{\frac t2(3+\sqrt5)}-\frac1{\sqrt5}e^{\frac t2(3-\sqrt5)}
$$
as a particular solution. However, applying the Laplace-type transform to this particular solution of
the differential equation and using {\rm Lemma \ref{lm1}}, we find the particular solution of the Fibonacci
difference equation
$$
F_n=\frac1{\sqrt5}\Big(\frac{\sqrt5+1}2\Big)^n-\frac1{\sqrt5}\Big(\frac{1-\sqrt5}2\Big)^n,\quad n=0,1,2,\dots,
$$
which is the well-known Binet formula.
\end{example}

\begin{example}\sl Consider the difference equation
\begin{equation}\label{*}
a_0f_n+a_1f_{n-1}+\cdots+a_pf_{n-p}=g_n,\quad a_0,a_1,\dots,a_p\in\mathbb R.
\end{equation}
We regard it as a special case of the functional difference equation
$$
a_0\f_n(s)+a_1\f_{n-1}(s)+\cdots+a_p\f_{n-p}(s)=\gamma_n(s)
$$
for $s=1$. Applying \eqref{fn-p-nabla-p-fn}, we find
$$
Q_p(s)\f_n(s)-\frac{Q'_p(s)}{1!}\nabla_s\f_n(s)+\frac{Q''_p(s)}{2!}\nabla_s^2\f_n(s)
+\cdots+(-1)^p\frac{Q_p^{(s)}(s)}{p!}\nabla_s^p\f_n(s)=\gamma_n(s),
$$
where $Q_p(s)=a_0+a_1s+a_2s^2+\cdots+a_ps^p$. Taking $s=1$ gives rise to the difference equation
\begin{equation}\label{**}
b_0f_n+b_1\nabla f_n+b_2\nabla^2f_n+\cdots+b_{p-1}\nabla^{p-1}+b_p\nabla^pf_n=\gamma_n,
\end{equation}
where $b_k=(-1)^k\dfrac{Q_p^{(k)}(1)}{k!},\,k=0,1,\dots,p$. Thus, \eqref{*} and \eqref{**} are equivalent.
Relying on {\rm Theorem \ref{th2}}, using $\gt\{y^{(k)}(t)\}=\nabla^kf_n$, $k=0,1,\dots,p$,
we conclude that the differential equation
$$
L[y]=b_py^{(s)}+b_{p-1}y^{(p-1)}+\cdots+b_2y''+b_1y'+b_0y=g(t),
$$
gives rise to the difference equation \eqref{**}. We search for a solution of Cauchy's problem $L[Y]=0$,
$Y^{(k)}(0)=0$, $k=0,1,\dots,n-2$, $Y^{(n-1)}(0)=1$, which is in {\rm \cite{Kamke}} given by
$$
y=\int_0^t Y(t-\tau)g(\tau)\,d\tau.
$$
If we now apply {\rm Lemma \ref{lm3}}, so that $\gt\{y(t)\}=f_n$, $\gt\{Y(t)\}=Y_n$ and $\gt\{g(t)\}=g_n$,
we solve the difference equation \eqref{*} in the form of
$$
f_n=\sum_{k=0}^n Y_{n-k}g_k.
$$
\end{example}

\begin{example}\sl Consider the system of difference equations
$$
\nabla_s^2X_n=Y_n,\quad\nabla_s^2Y_n=-2X_n.
$$
Applying the $\rt$ transform to this system, we come to the system of differential equations
$$
\frac{d^2x}{dt^2}=y,\quad\frac{d^2y}{dt^2}=-2x.
$$
The general solution of this system is
$$
x=e^t(C_1\cos t+C_2\sin t),\quad y=e^t(C_1\sin t-C_2\cos t),
$$
where $C_1$ and $C_2$ are arbitrary constants. If the Laplace-type transform is applied to this solution,
the general solution of the above system of difference equations is obtained, i.e.
$$
X_n=(C_1\cos\a+C_2\sin\a)A(s),\quad Y_n=(C_1\sin\a+C_2\cos\a)A(s),
$$
where $A=\dfrac1{(\sqrt{s^2s+2})^{n+1}}$, $\a=(n+1)\arctg\dfrac1{s-1}$.
\end{example}

\begin{example}\sl If the Laplace-type transform is applied to the integral equation {\rm (see \cite{Makarenko})}
$$
\int_0^x(x-t)^\b f(t)dt=x^\lambda\qquad(\lambda\geqslant0,\,\b\geqslant-1,\quad\lambda,\b\in\mathbb R),
$$
it is mapped to the discrete equation
$$
\sum_{k=0}^n\frac{\G(n-k+\b+1)}{s^{\b-k}}\f_k(s)=\frac{\G(n+\lambda+1)}{s^{\lambda}}.
$$
The solution of the integral equation is given by
$$
f(x)=\dfrac{\G(\lambda+1)}{\G(\b+1)\G(\lambda-\b)}\,x^{\lambda-\b-1}\qquad(\lambda-\b+k\ne0,\quad k\in\mathbb N_0),
$$
and applying the Laplace-type transform to it, the solution of the discrete equation is obtained
$$
\f_k(s)=\dfrac{\G(\lambda+1)\G(n+\lambda-\b+2)}{n!\G(\b+1)\G(\lambda-\b)}\,s^{n+\lambda-\b-1}\qquad(\lambda-\b+k\ne0,\quad k\in\mathbb N_0).
$$
\end{example}% Vandermonde's identity
%$\binom{m+n}r=\sum\limits_{k=0}^r\binom mk\binom n{r-k}$, counting bit strings $\binom{n+1}{r+1}=\sum\limits_{k=r}^n\binom kr$

\begin{example}\sl The solution of the system of integral equations {\rm (see \cite{Makarenko})}
\begin{equation}\label{sys-integr}
\begin{cases}
f_1(t)=1-2\dint_0^t e^{2(t-\t)}f_1(\t)d\t+\dint_0^t f_2(\t)d\t,\\
f_2(t)=4t-\dint_0^t f_1(\t)d\t+4\dint_0^t(t-\t)f_2(\t)d\t
\end{cases}
\end{equation}
is $f_1(t)=e^{-t}-te^{-t}$, $f_2(t)=\frac89e^{2t}+\frac13te^{-t}-\frac89e^{-t}$.

Denote $\gt\{f_1\}(s)=\{\f_n(s)\}$ and $\gt\{f_2\}(s)=\{\psi_n(s)\}$. We apply the Laplace-type transform,
and use {\rm Lemma \ref{lm2}} and {\rm Lemma \ref{lm3}} to map \eqref{sys-integr} to the system of equations
\begin{equation}\label{sys-fi-psi}
\begin{cases}
\f_n(s)=\dfrac1{s^{n+1}}-2\dsum_{k=0}^n\frac{\f_k(s)}{(s-2)^{n-k+1}}+\dsum_{k=0}^n\frac{\psi_k(s)}{s^{n-k+1}},\\
\psi_n(s)=\dfrac{4(n+1)}{s^{n+2}}-\dsum_{k=0}^n\frac{\f_k(s)}{s^{n-k+1}}+4\dsum_{k=0}^n\frac{(n-k+1)\psi_k(s)}{s^{n-k+2}}.
\end{cases}
\end{equation}
By applying the Laplace-type transform to $f_1(t)$ and $f_2(t)$, we find the solution of \eqref{sys-fi-psi}
$$
\begin{cases}
\f_n(s)=\dfrac1{(s+1)^{n+1}}-\frac{n+1}{(s+1)^{n+2}},\\%\quad(n\geqslant2).
\psi_n(s)=\dfrac8{9(s-2)^{n+1}}+\frac{n+1}{3(s+1)^{n+2}}-\dfrac8{9(s+1)^{n+1}}.
\end{cases}
$$
\end{example}

\begin{example}Consider the integro-differential equation {\rm (see \cite{Makarenko})}
$$
f''(t)+\int_0^t e^{2(t-\t)}f'(\t)d\t=e^{2t},\quad f(0)=0,\,f'(0)=0.
$$
We rely on {\rm Theorem \ref{th2}} for $n\geqslant p$, where $p=2$, and apply the Laplace-type transform
to map it to the equation
\begin{equation}\label{integro-diff}
\nabla_s^2\f_n(s)+\sum_{k=0}^n\frac{\nabla_s\f_k(s)}{(s-2)^{n-k+1}}=\frac1{(s-2)^{n+1}}.
\end{equation}
Applying the Laplace-type transform now to the solution $f(t)=te^t-e^t+1$ of
the integrodifferential equation, we find
$$
\f_n(s)=\frac{n+1}{(s-1)^{n+2}}-\frac1{(s-1)^{n+1}}+\frac1{s^{n+1}}\quad(n\geqslant2),
$$
which is the solution of \eqref{integro-diff}.
\end{example}

\subsection{Some identities based on orthogonal polynomials}

Relying on the classical orthogonal polynomials and applying the Laplace-type transform
leads us to some interesting combinatorial identities.

\begin{example}\sl The explicit representation of the standard Laguerre polynomials is
$$
L_m(x)=\sum_{k=0}^m\frac{(-1)^k}{k!}\binom mk x^k.
$$
Applying the Laplace-type transform, they are mapped to
$$
L_m(x)\risingdotseq\sum_{k=0}^m\frac{(-1)^{n+k}}{s^{n+k+1}}\binom mk\binom{n+k}{k}. %\quad (n\geqslant m).
$$
Taking into account that and applying the Laplace-type transform to the algebraic equality involving
Laguerre polynomials {\rm (see \cite{Suetin})}
$$
\sum_{k=0}^m(-1)^k\binom mk L_k(x)=\frac{x^m}{m!},
$$
the following combinatorial identity is obtained
$$
\sum_{k=0}^m\binom mk\sum_{j=0}^k\frac{(-1)^{k+j}}{s^j}\binom kj\binom{n+j}j=\binom{m+n}n\frac1{s^m}.
$$
\end{example}

\begin{example}\sl To apply the Laplace-type transform to Legendre polynomials
$$
P_n(x)=\frac1{2^n}\sum_{k=0}^{[\frac n2]}(-1)^k\binom nk\binom{2n-2k}n x^{n-2k},\quad x\in(-1,1),
$$
we first have to map the interval $(-1,1)$ on to the interval $(0,+\infty)$. For that purpose, we make
use of the function $x=1-2e^{-t}$, whereby
$$
p_n(t)=P_n(1-2e^{-t})=\frac1{2^n}\sum_{k=0}^{[\frac n2]}(-1)^k\binom nk\binom{2n-2k}n
\sum_{j=0}^{n-2k}\binom{n-2k}j(-2e^{-t})^j,
$$
which, after interchanging the order of summation, can be rewritten in the form of
$$
p_n(t)=\sum_{j=0}^n e^{-jt}
\sum_{k=0}^{[\frac n2]}\frac{(-1)^{j+k}}{2^{n-j}}\binom nk\binom{2n-2k}n\binom{n-2k}j
=\sum_{j=0}^n A_{n,j}\,e^{-jt},
$$
where
\begin{equation}\label{1*}
A_{n,j}=\sum_{k=0}^{[\frac n2]}\frac{(-1)^{j+k}}{2^{n-j}}\binom nk\binom{2n-2k}n\binom{n-2k}j.
\end{equation}

By means of the function $x=1-2e^{-t}$ Bonnet's recursion formula of Legendre polynomials
$$
(m+1)P_{m+1}(x)=(2m+1)xP_m(x)-m P_{m-1}(x)
$$
is transformed to
$$
(m+1)p_{m+1}(t)=(2m+1)(1-2e^{-t})p_m(t)-m p_{m-1}(t),
$$
so, we have
$$
(m+1)\sum_{j=0}^{m+1}A_{m+1,j}e^{-jt}=(2m+1)(1-2e^{-t})\sum_{j=0}^mA_{m,j}e^{-jt}
-m\sum_{j=0}^{m-1}A_{m-1,j}e^{-jt}.
$$
After rearrangement and comparing corresponding coefficients, we obtain combinatorial identities. First of all, simple ones. The equalities $(m+1)A_{m+1,m+1}=-2(2m+1)A_{m,m}$ and $(m+1)A_{m+1,m}=(2m+1)(A_{m,m}-2A_{m,m-1})$ both yield
\medskip

$\displaystyle 1^\circ\quad\binom{2m+2}{m+1}=\frac{4m+2}{m+1}\binom{2m}{m}$,
\medskip

\noindent but the equality $(m+1)A_{m+1,0}=(2m+1)A_{m,0}-mA_{m-1,0}$ yields
\begin{multline*}
\hskip0.2cm2^\circ\quad\sum_{k=0}^{[\frac{m+1}2]}(-1)^k\binom{m+1}k\binom{2m+2-2k}{m+1}
=\frac{4m+2}{m+1}\dsum_{k=0}^{[\frac m2]}(-1)^k\binom mk\binom{2m-2k}m\\
-\frac{4m}{m+1}\sum_{k=0}^{[\frac{m-1}2]}(-1)^k\binom{m-1}k\binom{2m-2-2k}{m-1}.
\end{multline*}
More generally, the equality $(m+1)A_{m+1,j}=(2m+1)(A_{m,j}-2A_{m,j-1})-mA_{m-1,j}$, where $j=1,2,\ldots,m-1$, yields
\begin{multline*}
\hskip0.2cm3^\circ\quad\sum_{k=0}^{[\frac{m+1}2]}(-1)^{k}\binom{m+1}k\binom{2m+2-2k}{m+1}\binom{m+1-2k}j\\
=\frac{4m+2}{m+1}\sum_{k=0}^{[\frac m2]}(-1)^{k}\binom mk\binom{2m-2k}m\binom{m-2k+1}j\\
-\frac{4m}{m+1}\sum_{k=0}^{[\frac{m-1}2]}(-1)^{k}\binom{m-1}k\binom{2m-2-2k}{m-1}\binom{m-1-2k}j.
\end{multline*}
\end{example}

\section{Appendix}

\begin{center} {\tabcolsep 6pt \renewcommand{\arraystretch}{1.95}
\begin{tabular}{|c|c|}
\hline
$f(t)$ & $f(t)\risingdotseq\f_n(s)=\dfrac1{\G(n+1)}\int_0^\infty e^{-st}t^nf(t)\,dt$\\
\hline
$e^{at}$ & $\dfrac1{(s-a)^{n+1}}$ \\
\hline
$e^{at}f(t)$ & $\f_n(s-a)$\\
\hline
$t^a,\quad(a>-1)$ & $\dfrac{\G(a+n+1)}{s^{n+a+1}\G(n+1)}$\\
\hline
$t^ae^{-bt},\quad(a>-1)$ & $\dfrac{\G(a+n+1)}{(s+b)^{n+a+1}\G(n+1)}$\\
\hline
$\sin at$ & $\dfrac1{\sqrt{(s^2+a^2)^{n+1}}}\sin\left((n+1)\arctg\frac as\right)$\\
\hline
$\cos at$ & $\dfrac1{\sqrt{(s^2+a^2)^{n+1}}}\cos\left((n+1)\arctg\frac as\right)$\\
\hline
$f(t-a),\quad(a>0)$ & $e^{-as}\dsum_{k=0}^n\frac{a^{n-k}}{(n-k)!}\f_{n-k}(s)$\\
\hline
$\ln t$ & $\dfrac1{s^{n+1}}(H_n-\gamma-\ln s),\quad\gamma=\lim\limits_{n\to\infty}(H_n-\ln n),
\quad H_n=\dsum_{k=1}^n\frac1k$\\
\hline
\end{tabular}}
\end{center}

%6. Conclusion: It is essential to provide a concluding section summarizing the findings and implications of the work.

\end{document}